\newtheorem{proposition}{Proposition}
\newtheorem{remark}{Remark}
\def\begcen{\begin{center}}
\def\endcen{\end{center}}
\newcommand{\col}{ \mbox{col} }
\def\cala{{\cal A}}
\def\hal{{1 \over 2}}
\def\L1ac{${\cal L}_1$--AC}
\def\L1{{\cal L}_1}
\def\Linf{{\cal L}_\infty}
\def\L2{{\cal L}_2}
\def\L2e{{\cal L}_{2e}}
\def\rea{\mathbb{R}}
\def\begequarr{\begin{eqnarray}}
\def\endequarr{\end{eqnarray}}
\def\begequarrs{\begin{eqnarray*}}
\def\endequarrs{\end{eqnarray*}}
\def\begarr{\begin{array}}
\def\endarr{\end{array}}
\def\begequ{\begin{equation}}
\def\endequ{\end{equation}}
\def\lab{\label}
\def\begdes{\begin{description}}
\def\enddes{\end{description}}
\def\begenu{\begin{enumerate}}
\def\begite{\begin{itemize}}
\def\endite{\end{itemize}}
\def\endenu{\end{enumerate}}
\def\lef[{\left[\begin{array}}
\def\rig]{\end{array}\right]}
\def\begcen{\begin{center}}
\def\endcen{\end{center}}
\def\begrem{\begin{remark}\rm}
\def\endrem{\end{remark}}
\def\TAC{{\it IEEE Transactions of Automatic Control}}
\def\SCL{{\it Systems and Control Letters}}
\def\IJC{{\it Int. J. of Control}}
\def\CSM{{\it IEEE Control Systems Magazine}}
\begin{document}
\title{Adaptation is Unnecessary in ${\cal L}_1$--``Adaptive" Control}
\author{ {\bf Romeo Ortega and Elena Panteley} \\
         Laboratoire des Signaux et Syst\`emes \\
        CNRS--Supelec\\
        Plateau du Moulon \\
        91192 Gif-sur-Yvette, France\\
        {\tt ortega[panteley]@lss.supelec.fr}
        }

\maketitle
%
%%%%%%%%%%%%%%
\section{What makes ``adaptive" an adaptive controller?}
\lab{sec1}
%%%%%%%%%%%%%%%%%
%
The basic {premise} upon which adaptive control is based is the existence of a parameterized controller that achieves the control objective. It is, moreover, assumed that these parameters are not
known but that they can be estimated on--line from measurements of the plant signals. Towards this end, an identifier is added to generate the parameter estimates. Then, applying in an {\em ad--hoc}
manner a certainty equivalence principle, these estimates are directly applied in the aforementioned control law.

Let us illustrate the discussion above with the simplest example of direct, adaptive, state--feedback stabilization of single--input, linear
time--invariant (LTI) system of the form
\begequ
\lab{pla}
\dot x = Ax + bu
\endequ
where the state $x\in \rea^n$ is assumed to be {\em measurable},  $u\in\rea$ is the control signal, $A \in \rea^{n \times n}$ is the system matrix
and $b \in \rea^n$ the input vector. It is assumed that there exists a vector $\theta \in \rea^n$ such that
$$
A+b\theta^\top=:A_m
$$
is a Hurwitz matrix, but this vector is {\em unknown}. In this case, the ideal control law takes the form
\begequ
\lab{idecon}
u = \theta^\top x,
\endequ
that, as mentioned above, is made adaptive adding an identifier that generates the estimated parameters $\hat \theta \in \rea^n$. In this way, we
obtain the adaptive control law
\begequ
\lab{adacon}
 u = \hat \theta^\top x.
\endequ
Defining the parameter error
\begequ
\lab{tilthe}
\tilde \theta:=\hat \theta - \theta,
\endequ
the control law may be written as
$$
u = \theta^\top x + \tilde \theta^\top x.
$$
If the parameter estimates converge to the desired value $\theta$ the control signal converges to the ideal control law \eqref{idecon} and asymptotic stabilization is achieved---provided $x$ remains
bounded.\footnote{Actually, to achieve stabilization it is enough that $\hat \theta$ converges to the set $\{K \in \rea^n\;|\; A+bK^\top \mbox{ \;is \;Hurwitz\;}\}$. This is the fundamental
self--tuning property of direct adaptive control.}

A key observation is that the ideal control signal  \eqref{idecon} {\em cannot be implemented} without knowledge of the unknown parameters. If this were not the case adaptation would be unnecessary
and we simply would plug in the controller that results when $\tilde \theta=0$!

In a (long) series of recent papers---see, {\em e.g.},  \cite{HOVetal} and the extensive list of references therein---it has been proposed to
replace \eqref{adacon} by
\begequ
\lab{sysu}
\dot u = -k (u-\hat \theta^\top x),
\endequ
where $k>0$ is a design parameter. Combining \eqref{sysu} with a standard state prediction--based estimator is called in \cite{HOVetal} ${\cal
L}_1$--adaptive control, which in the sequel we refer to as ${\cal L}_1$--AC.

The purpose of this paper is to prove, via a proposition given below, that adaptation is {\em unnecessary} in ${\cal L}_1$--AC in the following precise sense.
 {
\begite
\item[F1] For any parameter estimation law, the control signal \eqref{sysu} {\em exactly} coincides with the output of the LTI, full--state feedback, perturbed,  PI controller
\begequarr
\nonumber
\dot v & = & -K_I^\top x + k \tilde \theta^\top x\\
u & = & v -K_P^\top x,
\lab{pi}
\endequarr
with the gains $K_P,K_I \in \rea^n$ are {\em independent} of the parameters $\theta$.
\item[F2] The term $\tilde \theta^\top x$ converges to zero. Hence, the ${\cal L}_1$--AC {\em always} converges to a controller that can be obtained {\em without} knowledge of the unknown parameters.

\item[F3] If the implementable PI controller\footnote{In the sequel we will say that a controller is ``implementable" if its gains are independent of the unknown plant parameters.}
\begequarr
\nonumber
\dot v & = & -K_I^\top x \\
u & = & v -K_P^\top x,
\lab{pi1}
\endequarr
{\em does not stabilize} the plant \eqref{pla} then the ${\cal L}_1$--AC does not stabilize it either.
\endite
}
%
%%%%%%%%%%%%%%
\section{Is adaptation necessary in ${\cal L}_1$--adaptive control?}
\lab{sec2}
%%%%%%%%%%%%%%%%%
%
We analyze in this paper the ${\cal L}_1$--AC proposed in \cite{HOVetal} to address the basic problem of stabilization of single--input, LTI systems discussed in the previous section. In ${\cal L}_1$--AC,
besides the (overly restrictive) assumption of measurable state, it is assumed  that the system can be represented in canonical form
\begequ
\lab{a}
A = \lef[{ccccc} 0 & 1 & 0 & \dots & 0  \\
                  0 & 0 & 1 & \dots & 0  \\
                   \vdots & \vdots & \vdots & \vdots & \vdots  \\
                    0 & 0 & 0 & \dots & 1  \\
                    -a_1 & -a_2 & -a_3 & \dots & -a_n
   \rig],
\endequ
where $a_i \in \rea,\;i\in \bar n:=\{1,\dots,n\}$ are unknown coefficients, and that the input vector $b$ is {\em known}. In the sequel we set $b=e_n$, the $n$--th vector of the Euclidean basis, which is done
without loss of generality in view of the assumption of known $b$. The system \eqref{pla} can also be expressed in the form
\begequ
\lab{sysx}
\dot x  =  A_m x - b (\theta^\top x -  u)
\endequ
with
\begequ
\lab{am}
A_m = \lef[{ccccc} 0 & 1 & 0 & \dots & 0  \\
                  0 & 0 & 1 & \dots & 0  \\
                   \vdots & \vdots & \vdots & \vdots & \vdots  \\
                      0 & 0 & 0 & \dots & 1  \\
                    -a^m_1 & -a^m_2 & -a^m_3 & \dots & -a^m_n
   \rig]
\endequ
where $a^m_i >0,\;i\in \bar n$, are designer chosen coefficients and $\theta \in \rea^n$ is a vector of {\em unknown} parameters, given by
\begequ
\lab{the}
\theta=\col(a_1 -a^m_1, a_2 -a^m_2,\dots, a_n-a^m_n),
\endequ
where $\col(\cdot)$ denotes column vector. In the ${\cal L}_1$--AC proposed in \cite{HOVetal} the control law is computed via \eqref{sysu}. The parameters are updated using the classical state predictor--based
estimator
\begequarr
\nonumber
\dot {\hat x} & = & A_m \hat x - b (\hat \theta^\top x -  u)\\
\dot {\hat \theta} & = &  \gamma x (\hat x-x)^\top Pb
\lab{syshatx}
\endequarr
where $\gamma >0$ is the adaptation gain and $P>0$ is a Lyapunov matrix for $A_m$, that is,
$$
P A_m + A_m^\top P = - Q,
$$
where $Q\in \rea^{n \times n}$ is a positive definite matrix.

{   The proposition below formally establishes the facts F1--F3 stated in the previous section.\\
\begin{proposition}
\lab{pro0}
Consider the plant \eqref{pla} with $A$ given by \eqref{a} and $b=e_n$.
\begenu
\item[P1] Independently of the parameter estimation, the signal $u$ generated by the ${\cal L}_1$--AC control law \eqref{sysu}
exactly coincides with the output of the perturbed, full-state feedback, LTI, {\em implementable} PI controller \eqref{pi} with
\begequarr
\nonumber K_I & = & k \; \col( a^m_1, a^m_2, a^m_3, \dots, a^m_n)\\
K_P & = & k \;e_n.
\lab{pigai}
\endequarr
\item[P2]  If the ${\cal L}_1$--AC controller \eqref{sysu}, \eqref{syshatx} ensures boundedness of trajectories then the perturbation term verifies
\begequ
\lab{lim}
\lim_{t \to \infty}|\tilde \theta^\top(t)x(t)|=0.
\endequ
Consequently, the (bounded state) ${\cal L}_1$--AC {\em always} converges to the PI controller.
\item[P3]  If the PI \eqref{pi1}, \eqref{pigai} {\em does not} ensure stability of the closed--loop system then the ${\cal L}_1$--AC \eqref{sysu}, \eqref{syshatx} does not ensure boundedness of
trajectories.
\endenu
\end{proposition}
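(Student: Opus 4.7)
The plan for P1 is purely algebraic: exploit the companion structure of $A$ to eliminate $\theta$ from the \L1ac\ control law. With $b=e_n$, the last row of \eqref{pla} reads $\dot x_n = -\sum_{i=1}^n a_i x_i + u$, so $\sum_i a_i x_i = u - \dot x_n$. Combined with the definition \eqref{the} of $\theta$ this gives $\theta^\top x = u - \dot x_n - \sum_i a^m_i x_i$. Substituting $\hat\theta^\top x = \theta^\top x + \tilde\theta^\top x$ into \eqref{sysu} and moving $k\dot x_n$ to the left-hand side, one defines $v := u + k x_n$ and reads off $\dot v = -k \sum_i a^m_i x_i + k \tilde\theta^\top x$ together with $u = v - k e_n^\top x$. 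This is exactly \eqref{pi} with the gains \eqref{pigai}, and crucially $\theta$ no longer appears.

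For P2, the plan uses the standard prediction-error Lyapunov setup. Introducing $\tilde x := \hat x - x$ and subtracting \eqref{sysx} from the first line of \eqref{syshatx} gives
$$
\dot{\tilde x} = A_m \tilde x - b\, \tilde\theta^\top x.
$$
The candidate $V := \tilde x^\top P \tilde x + \gamma^{-1} \tilde\theta^\top \tilde\theta$ satisfies $\dot V = - \tilde x^\top Q \tilde x$, since the cross term $-2\tilde x^\top Pb \,\tilde\theta^\top x$ cancels against the adaptation law $\dot{\hat\theta} = \gamma x \tilde x^\top Pb$. Hence $\tilde x, \tilde\theta$ are bounded and $\tilde x \in {\cal L}_2$ regardless of whether $x$ is bounded. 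Under the boundedness hypothesis of P2, $\dot{\tilde x}$ is also bounded, so Barbalat's lemma yields $\tilde x(t) \to 0$. The main obstacle is promoting this to $\tilde\theta^\top x \to 0$: from $b \tilde\theta^\top x = A_m \tilde x - \dot{\tilde x}$ and $b = e_n$, it suffices to show $\dot{\tilde x} \to 0$. The idea is to check that $\ddot{\tilde x}$ is bounded—using boundedness of $\dot{\hat\theta} = \gamma x \tilde x^\top Pb$ and of $\dot x = Ax + bu$—so that $\dot{\tilde x}$ is uniformly continuous, and then invoke the elementary fact that if $f(t) \to 0$ with $\dot f$ uniformly continuous then $\dot f(t) \to 0$ (by contradiction on a window where $\dot f$ keeps constant sign).

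For P3, the cleanest route sidesteps P2 entirely and exploits a well-chosen initial condition. Setting $\hat x(0) = x(0)$ and $\hat\theta(0) = \theta$ makes $\tilde x(0) = 0$ and $\tilde\theta(0) = 0$; since $(\tilde x, \tilde\theta)$ obeys an ODE whose right-hand side is linear in these variables with coefficients depending only on $x$, uniqueness of solutions forces $\tilde x(t) \equiv 0$ and $\tilde\theta(t) \equiv 0$ for all $t$. By P1 the closed loop then coincides identically with the nominal implementable PI \eqref{pi1}, \eqref{pigai}, which by hypothesis admits an unbounded trajectory from some $(x(0),v(0))$; the same initial condition drives the \L1ac\ trajectory unbounded. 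Note that choosing $\hat\theta(0) = \theta$ does not require the designer to know $\theta$—it is merely an admissible initial condition that any ``ensures boundedness'' claim must cover.
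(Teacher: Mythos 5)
Your proposal is correct, and for P1 it is the paper's own argument in coordinates: your identity $\theta^\top x = u-\dot x_n-\sum_i a^m_i x_i$ is exactly the paper's $u-\theta^\top x=e_n^\top(\dot x-A_mx)$, followed by the same change of variable $v=u+ke_n^\top x$. For P2 you use the same error dynamics and (up to a factor of $2$) the same Lyapunov function, but you finish with Barbalat-type arguments where the paper invokes LaSalle's invariance principle on the full bounded closed loop and concludes from the largest invariant set in $\{\tilde x=0\}$ that $\tilde\theta^\top x\to 0$. Your route is more laborious but also more explicit: the paper's closing phrase ``analyzing the first equation of \eqref{errequ2}'' hides precisely the step you spell out, namely that $\tilde x\to 0$ alone does not give $\tilde\theta^\top x\to 0$ and one must additionally argue $\dot{\tilde x}\to 0$ (via uniform continuity of $\dot{\tilde x}$, which needs the boundedness hypothesis to control $\dot{\hat\theta}$ and $\dot x$). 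For P3 you genuinely diverge: the paper proves the contrapositive through P2, treating the third equation of \eqref{errequ2} as an LTI system with input $\tilde\theta^\top x\to 0$ and output $\col(x,u)$ bounded for all initial conditions, and concluding $\cala_0$ is stable; you instead exhibit the initial condition $\hat x(0)=x(0)$, $\hat\theta(0)=\theta$, for which uniqueness of solutions forces $\tilde x\equiv 0$, $\tilde\theta\equiv 0$, so the closed loop \emph{is} the nominal PI loop and inherits its unbounded trajectory. Your version is more elementary and arguably tighter, since it does not require the (somewhat delicate) step of varying initial conditions of an LTI block whose ``input'' is itself trajectory-dependent; its only cost is the need to justify that $\hat\theta(0)=\theta$ is an admissible initial condition for a claim of the form ``ensures boundedness of trajectories,'' which you do, and which is consistent with the paper's own insistence (Sidebar 2) that such claims must hold for all initial conditions.
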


\begin{proof}
To establish P1 we use the definition of the parameter error \eqref{tilthe} to write the control signal \eqref{sysu} as
\begequ
\lab{cdotu}
\dot u  = -k(u- \theta^\top x)+ k\tilde \theta^\top x.
\endequ
Now, pre--multiplying the plant dynamics \eqref{sysx}---that is equivalent to  \eqref{pla}---by $e_n^\top$, and rearranging terms, we get
$$
u- \theta^\top x=e_n^\top(\dot x - A_mx),
$$
that, upon replacement in \eqref{cdotu}, yields
$$
\dot u  = -ke_n^\top(\dot x - A_mx)+k \tilde \theta^\top x.
$$
The proof is completed defining the signal
$$
v=u+k e_n^\top x,
$$
and using the definition of $A_m$ given in \eqref{am}.

To prove P2 we first write the dynamics of the system \eqref{sysx} in closed--loop with the ${\cal L}_1$--AC   \eqref{syshatx}, \eqref{cdotu},
\begequarr
\nonumber
\dot {\tilde x} & = &  A_m\tilde x - b \tilde \theta^\top x \\
\nonumber
\dot {\tilde \theta} & = &  \gamma x\tilde x^\top Pb\\
\lef[{c} \dot x \\ \dot u \rig]&=&\cala_0 \lef[{c} x \\ u\rig] + \lef[{c} 0 \\ k\tilde \theta^\top x\rig],
\lab{errequ2}
\endequarr
where\footnote{This matrix is reported in equation (12) of \cite{CAOHOV}.}
$$
\cala_0 = \lef[{cc} A & b  \\ k \theta^\top & -k \rig],
$$
and $\tilde x  =  \hat x - x$ is the prediction error. Consider the function
$$
V(\tilde x, \tilde \theta)=\hal \tilde x^\top P \tilde x +{1 \over 2 \gamma}|\tilde \theta|^2,
$$
whose derivative along the trajectories of \eqref{errequ2} is
$$
\dot V = -\hal \tilde x^\top Q \tilde x.
$$
Since it has been assumed that all trajectories are bounded we can invoke LaSalle's invariance principle to conclude that all trajectories converge to the largest invariant set contained in $\{\tilde x=0\}$.
The proof is completed analyzing the first equation of \eqref{errequ2}.

The proof of P3 is established proving the converse implication, {\em i.e.}, that the trajectories of the ${\cal L}_1$--AC are bounded implies stability of the plant in closed--loop with the PI. In point P2 we
proved that if the trajectories of \eqref{errequ2} are bounded \eqref{lim} holds true. Now, the system in the third equation of \eqref{errequ2} is an LTI system whose input, {\em i.e.}, $\tilde \theta^\top x$
converges to zero
and whose output $\col(x,u)$ is bounded, for all initial conditions $\col(x(0),u(0))$, consequently the matrix $\cala_0$ is stable.\\
\end{proof}
%
%%%%%%%%%%%%%%
\section{Some Further Remarks}
\lab{sec3}
%%%%%%%%%%%%%%%%%
%
\noindent {\bf R1} The property P1 in Proposition \ref{pro0} underscores that the stabilization mechanism of ${\cal L}_1$--AC is independent of the parameter adaptation, instead it is an elementary linear
systems principle. {{As shown in the proposition, the effect of the adaptation appears as a  perturbation term $k \tilde \theta^\top x$ to the implementable PI controller that, if trajectories are bounded,
asymptotically converges to zero. This explains why in ${\cal L}_1$--AC it is suggested to increase the adaptation gain---hoping that this term will die--out quickly. Moreover,  ${\cal L}_1$--AC includes a
parameter projection that, due to the use of utterly high adaptation gains, induces a bang--bang--like behavior in the estimate that, in average, behaves like a constant value. See \cite{BOSMEH} for some
conclusive simulated evidence.}}

\noindent {\bf R2} The qualifier ``implementable" is essential to appreciate the significance of our results. Of course, all (linearly parameterized) adaptive controllers can be implemented as an LTI system
perturbed by the parameter error but the resulting LTI system depends on {\em unknown plant} parameters. Due to the inclusion of the input filter, this is not the case in  ${\cal L}_1$--AC---rendering
irrelevant the use of adaptation. {  In \cite{ortpanscl} this deleterious effect of the input filter has been shown to be pervasive for all model reference controller structures, not just the state--feedback,
canonical system representation treated in this paper.

\noindent {\bf R3} In \cite{ortpanifac} it has been shown that there exists $k_c>0$ such that the  PI controller \eqref{pi1}, \eqref{pigai} ensures global asymptotic stability of the closed--loop system for
all $k>k_c$, all unknown parameters $a_i,i \in \bar n$, and all Hurwitz matrices $A_m$ of the form \eqref{am}. On the other hand, to the best of the authors' knowledge, it is not known whether there exists
suitable values of $\gamma$ and $k$ such that the origin of \eqref{errequ2} is (asymptotically) stable for all unknown parameters $a_i,i \in \bar n$, and all Hurwitz matrices $A_m$ of the form \eqref{am}.

\noindent {\bf R4} We have assumed for simplicity the case of regulation to zero and taken the input filter used in  ${\cal L}_1$--AC as $ D(s)={k \over s+k}. $ The proposition extends {\em verbatim} to the
case of nonconstant reference and general (stable, strictly proper) LTI filters $D(s)$. See  \cite{ortpanscl} and \cite{ortpanifac}.

\noindent {\bf R5} As shown in \cite{ortpanifac} the characteristic polynomial of $\cala_0$ satisfies\footnote{From \eqref{carpol1} it follows that $\cala_0$ may not have an eigenvalue at zero, but it may have
eigenvalues in the $j\omega$ axis. Hence, the stability statement in P3 of Proposition \ref{pro0} cannot be strengthened to asymptotic stability. The authors thank Denis Efimov for this insightful remark.} }
\begequ
\lab{carpol1}
\det (sI_n - \cala_0) =s\det (sI_n - A) + k\det (sI_n - A_m).
\endequ
From which it is clear that, if the plant is unstable, it is necessary to take ``large" values of $k$ to stabilize the ${\cal L}_1$--AC, see \eqref{errequ2}. This is in contradiction with the main promotional
argument of ${\cal L}_1$--AC, namely that ``it compensates for the mismatch between the ideal system and the plant within the frequency range of the lowpass filter $D(s)$". Moreover, it is recognized in
\cite{HOVetal} that ``the allowed bandwidth of the filter $D(s)$ is limited by robustness considerations"---contradicting, again, the need for large $k$.  It is interesting to note that in the limit, as $k \to
\infty$, from \eqref{sysu} we recover the good old model reference adaptive controller $u=\hat \theta^\top x$!

\noindent {\bf R6} In  \cite{HOVetal} it is argued that the inclusion of the input LTI filter and the use of large adaptation gains ``decouples the estimation and control loops"---a notion that is never
explained mathematically. In adaptive control ``decoupling" between the adaptation and the control loops is (partially) achieved using small adaptation gains that ensure the estimated parameters vary
slowly---with respect to the variation of the plant states. Leaving aside the numerical problems and unpredictable transient behavior generated with large adaptation gains, Point P2 of Proposition \ref{pro0}
clarifies this decoupling effect, namely, making the ${\cal L}_1$--AC converge faster to the implementable PI controller.

\noindent {\bf R7} In \cite{HOVetal} the condition\footnote{Actually, the condition (85) given in \cite{HOVetal} is far more restrictive than \eqref{consta}.}
\begequ
\lab{consta}
\|(pI_m - A_m)^{-1}b\theta^\top {p \over p+k}\|_\infty < 1
\endequ
where $p:={d \over dt}$ and $\|\cdot\|_\infty$ is the $\Linf$--induced operator norm,\footnote{For convolution operators $\|H\|_\infty=\|h(t)\|_1=\int_0^\infty |h(t)|dt$, where $h(t)$ is the impulse response
\cite{DESVID}.} is imposed to derive some $\Linf$ bounds on some suitably chosen signals---that, interestingly, do not include the tracking error. Clearly, this condition is far more restrictive than the
condition $k>k_c$ discussed in Remark R3 above, which ensures stability of the PI. As a matter of fact, in \cite{ortpan} it is shown that, for scalar systems, \eqref{consta} cannot be satisfied for all systems
and reference models.

\noindent {\bf R8}  The present paper extends the results of  \cite{ortpan}, where we treat only scalar systems. It is similar in spirit to the proof of \cite{HEUDUM} that output feedback ${\cal L}_1$--AC is,
actually, nonadaptive. It also complements the recent report \cite{NARetal} where the claims of robustness and performance improvement of ${\cal L}_1$--AC are scrutinized via theoretical analysis and a series
of numerical examples. The interested reader is also referred to \cite{BOSMEH,NARetal} where the issues of numerical instability due to high--gain adaptation and bang--bang behavior of the control due to
parameter projection ${\cal L}_1$--AC, are discussed. The inability of ${\cal L}_1$--AC to track non--constant references is widely acknowledged, see \cite{ortpan} for a particular example. A freezing property
of high--gain estimators, that puts a question mark on the interest of using it, is proven in \cite{BARORTAST}, see also \cite{ortpan}.

\noindent {\bf R9}  The motivation to crank up the adaptation gain in ${\cal L}_1$--AC is related with some transient performance bounds claimed by the authors. Indeed, it is easy to show  \cite{HOVetal} that,
if the initial conditions of the predictor {\em coincide with the initial conditions of the plant}, the prediction error is upper--bounded by a constant that is inversely proportional to the adaptation gain.
The intrinsic fragility of results relying on particular initial conditions is a key issue often overlooked by (mathematically oriented) researchers in our community. See Sidebar 2. 

\noindent {\bf R10}  It is not surprising that ${\cal L}_1$--AC has been successful in some applications. As shown above, it (essentially) coincides with a full state feedback PI controller that, as is
well--known, is robust and can reject  constant disturbances and track constant references, a scenario that seems to fit the realm of applications reported for this controller.
%
%
%%%%%%%%%%%%%%%%%%%%%%%%%%%%%%%%%%%%%%%%%%%%%

%
%%%%%%%%%%%%%%%%%%%%%
\section*{Sidebar 1: Comments Regarding \cite{KHAetal}}
%%%%%%%%%%%%%%%%%%%%%
%
In the abstract of  \cite{KHAetal}  one finds the following unusually candid
sentence: "the L1 adaptive controllers approximate an implementable
non-adaptive linear controller." In this sidebar we derive,
in a mathematically rigorous way, the calculations done in \cite{KHAetal} that motivated the previous sentence and place it in the context of our
work.

Towards this end, consider the LTI system
$$
y(s) = G(s)u(s),
$$
where $G(s) \in \rea(s)$ and is strictly proper. This corresponds to
equation\footnote{Numbers in brackets refer to the equations of   \cite{KHAetal}.}  \{1\} in  \cite{KHAetal}, where the symbol $A(s)$ is used instead of
$G(s)$, and a much more general plant model is treated. We consider
here the simplest scenario needed to convey our message. The plant
model is rewritten in \{2\} as 
\begequ
\lab{pla}
y(s) = M(s)[u(s) + \sigma(s)],
\endequ
where
\begequ
\lab{sig}
\sigma(s)=\left[G(s) \over M(s) -1\right]u(s), 
\endequ
with $M(s) \in \rea(s)$, stable of relative degree smaller than the
relative degree of $G(s)$. The ${\cal L}_1$--AC  \{7\} is given in this case by
\begequ
\lab{ulone}
u(s) = C(s)[r(s) - \hat \sigma(s)],
\endequ
where $r(t)$ is some reference signal, $C(s) \in \rea(s)$, is strictly proper,
stable and verifies $C(0) = 1$, and $\hat \sigma(t)$ is a signal that represents
an "estimate" of $\sigma(t)$ generated with the estimator \{4\}--\{6\}.

In \{8\} and \{9\} of \cite{KHAetal} the following signals, called reference
signals, are introduced
\begequarr
\nonumber
y_r(s) & = & M(s)[u_r(s) + \sigma_r(s)]\\
\nonumber
u_r(s) & = & C(s)[r(s) - \sigma_r(s)]\\
\lab{refsys}
\sigma_r(s)& =& {G(s) \over M(s)-1}u_r(s).
\endequarr
It important to note that \eqref{refsys} exactly coincides with \eqref{pla}--\eqref{ulone} when
$$
\hat \sigma(t) \equiv \sigma(t),
$$
which corresponds to the case {\em without adaptation and known plant parameters}.

It is then claimed, without proof, that the ${\cal L}_\infty$ norm of the
errors $y(t)-y_r(t)$ and $u(t)-u_r(t)$ can be made arbitrarily small "reducing the sampling time" of the estimator \{4\}--\{6\}. This leads
the authors to affirm that "the ${\cal L}_1$--AC system {\em approximates} the
reference system \eqref{refsys}".

The authors then proceed with some transfer function
manipulations to establish \{13\}, that is,
\begequ
\lab{uref}
u_r(s) = {C(s) \over [1 - C(s)]M(s)}[M(s)r(s) - y_r(s)].
\endequ
This is referred to as {\em limiting controller} and it is stated that "it
is equivalent to the ${\cal L}_1$--AC under {\em fast adaptation}". Notice that the
controller \eqref{uref}---for a plant with output $y_r$---can be implemented
without knowledge of the original plant parameters. This is the
justification given to the sentence in the abstract of   \cite{KHAetal} mentioned
above.

It is, furthermore, argued that the "architectures of both controller are fundamentally different". The motivation for this statement is
that, in contrast with ${\cal L}_1$--AC, the implementation of the limiting
controller involves the inversion of M(s) while "the estimation
loop in ${\cal L}_1$--AC computes the approximate desired systems inverse".

For the case treated in our paper
$$
C(s) = {k \over  s + k}, \quad \sigma(t)  =  \theta^\top x(t), \quad \hat \sigma(t)  =  \hat \theta^\top(t) x(t).
$$
For scalar plants the controller \eqref{uref}, with 
$$
y_r(t) \equiv y(t), \quad M(s)={1 \over s+a_m}, \quad r(t) \equiv 0,
$$
exactly coincides with the PI
controller reported in our paper.

As usual in the manipulation of stable transfer functions the calculations
of   \cite{KHAetal} neglect the exponentially decaying terms due to initial conditions. This kind of assumption is untenable in nonlinear systems, like adaptive controllers, since it is well--known that even for globally exponentially stable systems the trajectories can be driven to infinity when perturbed by exponentially decaying disturbances \cite{TEEHES}.

In Proposition \ref{pro0} we consider
general $n$--th order plants with {\em arbitrary} initial conditions, and prove
that the output of the  ${\cal L}_1$--AC---including the parameter estimator with {\em arbitrary} adaptation gain---exactly coincides with the output of an implementable PI perturbed
by a term coming from the adaptation. It is also established that
this term converges to zero, therefore the  ${\cal L}_1$--AC alway converges
to the PI. It is easy to prove that the latter result cannot be recovered
with the transfer function manipulations of   \cite{KHAetal}. Indeed, defining in the standard way
an estimation error
$$
\tilde \sigma(t) := \hat \sigma(t)-\sigma(t),
$$
and doing the calculations above with the original plant it follows that the
control signal \eqref{ulone} can be written as
$$
u(s) ={C(s) \over [1 - C(s)]M(s)} \left[M(s)r(s) - y(s)\right] +{C(s) \over 1 - C(s)} \tilde \sigma(s).
$$
Unfortunately, from this equation we cannot conclude that the  ${\cal L}_1$--AC
converges to the implementable LTI controller. Indeed, because of
the constraint $C(0) = 1$, the perturbing term $\tilde \sigma(t)$ passes through
an integrator. Since  we do not know if this signal is integrable we cannot even claim that its contribution to the
control signal is bounded---let alone converging to zero.

In the light of this discussion, the interest of the "approximation"
and "equivalence" statements of  \cite{KHAetal} is questionable and they
are certainly far from substantiating the claim made in the abstract
of the paper---a fact that Proposition \ref{pro0} presents in a crystal--clear
manner.
%
%%%%%%%%%%%%%%%%%%%%%
\section*{Sidebar 2: Comments Regarding the Analysis done in the  ${\cal L}_1$--AC Literature}
%%%%%%%%%%%%%%%%%%%%%
%%
As indicated in Remark R9 trajectory--dependent claims in control theory are intrinsically fragile.  Indeed, the whole body of control theory has been
developed to design controllers whose performance is guaranteed {\em independently of the initial conditions}---for instance, ensure stability (in the sense of Lyapunov) of a desired equilibrium point or finite gain of an operator. When the result is valid for a specific initial condition, this property is valid only for that specific trajectory, and cannot be extrapolated to any other one. Hence, in the presence of unknown and unpredictable disturbances, or measurement errors, that will drive away the state from that trajectory,
nothing can be said about the new trajectory. 

To illustrate this point consider the simple case of the LTI plant 
$$
\dot x_1 = - x_1,\quad \dot x_2 = x_2-x_1.
$$
Clearly, for all initial conditions in the set $\{x \in \rea^2\;|\; x_1=x_2\}$, the corresponding trajectory is bounded and converges to zero. But, obviously, all trajectories starting outside this set grow unbounded. 

Unfortunately, the analysis of  ${\cal L}_1$--AC reported in the literature is trajectory dependent since it relies on the assumption that the initial state of the estimator \eqref{syshatx} {\em coincides} with the initial state of the plant, that is $\hat x(0)=x(0)$.  Since the plant state cannot be exactly measured, or maybe subject to disturbances that would require an unpractical resetting of the estimator, this condition is always violated in practice. 
 
\end{document}